\DeclareMathOperator{\sinc}{sinc}
\newtheorem{theorem}{Theorem}
\title{Asymptotic evaluation of the Sinc transform of entire exponential type function resulting to exact polynomial asymptotic behavior}
\author{Nathalie Liezel R. Rojas and Eric A. Galapon \\
{\normalsize Theoretical Physics Group,  National Institute of Physics } \\
{\normalsize University of the Philippines Diliman } \\
{\normalsize Quezon City, 1101 Philippines}}
\date{\today}
\begin{document}
\maketitle

\begin{abstract}
We consider the asymptotic evaluation of the integral transform $\int_0^\infty f(x) \, \sin^n(\lambda x)/x^n \,\text{d} x$ of an exponential type function $f(x)$ of type $\tau>0$, for large values of the parameter $\lambda$, where $n$ is a positive integer. We refer to this integral as the Sinc transform. Under the condition that $f(x)$ is even with respect to $x$, we derive a terminating asymptotic expansion of the Sinc transform which behave as a polynomial in positive powers of $\lambda$ as $\lambda$ grows large provided that the conditions $\lambda > \tau/2$ for even $n$ and $\lambda>\tau$ for odd n are satisfied.
\end{abstract}

\hspace{1em} {\small Keywords: Poincar{\'e} asymptotic expansion, integral transform, Sinc function, entire exponential type function}

\hspace{1em} {\small MSC Classification: 30D15, 41A60, 44A15}

\section{Introduction}
An integral transform given by $F(\lambda) = \int_a^b \, f(x) \, K (\lambda x) \text{d}x$ converts a function $f(x)$ over an interval $a \leq x \leq b$ into a new function $F(\lambda )$ where $K (\lambda x)$ is the kernel of transformation.
The importance of the integral transform lies in its ability to convert a complex mathematical problem into a relatively simpler one, which can be more easily solved \cite{debnath2016integral}. For many common transforms where $a=0$ and $b=\infty$, approximation methods are often necessary for the evaluation of these integrals for large values of the parameter $\lambda$ because some do not have exact solutions that can be expressed in terms of standard functions, but some methods result in approximations where the leading term may contain singularities or discontinuities \cite{paulsen2013asymptotic}. 
One approach to overcome these limitations is the use of the Poincar{\'e} asymptotic expansions (PAE). PAE offers a powerful method for approximating solutions to functions such as integral transforms, particularly when the parameter becomes large.
If there exists a power series representation of an integral transform which may converge or diverge, the  Poincar{\'e} asymptotic expansion for arbitrarily large $\lambda$ of this integral transform is given by $F(\lambda ) \sim \sum_{n=0}^\infty \, a_n \, \lambda^{-n}$
where $a_n$ are the coefficients of the expansion
\cite{wong2001asymptotic}.
For large parameter values, the Poincar{\'e} asymptotic expansion may yield a divergent series.
In general, the series does not terminate and a truncated series of the PAE remains only as an approximation of the integral transform that captures its behavior as the parameter grows large.

Recently, an asymptotic analysis of the Hankel transform, $\int_0^\infty f(x) J_{\nu} (\lambda x) \text{d}x$, of an entire exponential type function, $f(x)$, for arbitrary large $\lambda$ was done in \cite{rojas2025terminating}. Terminating asymptotic expansions were derived in which the expansions exhibit behavior that are similar to the PAE. That is, $\int_0^\infty f(x) J_{\nu} (\lambda x) \text{d}x = \sum_{k=0}^N a_k \lambda^{-k}$, $N<\infty$ for sufficiently large $\lambda$. This result was obtained by considering the integrand, $f(x) J_{\nu} (\lambda x)$, to be odd with respect to $x$ and performing a shifting of contour integration in the complex plane.
It was shown that the exponentially small trailing terms in the asymptotic expansion vanish when the asymptotic parameter is greater than the type of the exponential type function leading to an exact evaluation of the Hankel integral transform rather than an approximation of the integral. 
These findings raise now the question of how a terminating Poincar{\'e} asymptotic expansion can be obtained, as such expansions are not common in asymptotic analysis. This observation provides motivation to further investigate the conditions under which terminating expansions can arise.

In this paper, we aim to asymptotically evaluate the integral transform given by
\begin{equation}\label{eqn:IntTrans}
    I (\lambda) = \int_{0}^\infty  \frac{f(x) \, \sin^n (\lambda x) }{x^n} \text{d}x,
\end{equation}
where $f(x)$ is an entire function of exponential type $\tau>0$ and is even in $x$. We refer to this integral as the Sinc transform, where $ \sin (x)/ x $, or $\sinc (x) $ is the sine cardinal function \cite{mathworldSinc}.  
An entire function $f(x)$ with a complex extension $f(z)$ in the complex plane is of exponential type $\tau$ if it satisfies the inequality $|f(z)| \leq   e^{(\tau + \epsilon) |z|}$ for sufficiently large $|z|$ and for any arbitrarily small $\epsilon>0$ \cite{galapon2016internal, levin1996lectures,boas1954entire}. 
A specific example of the Sinc transform for $n=2$ in equation \eqref{eqn:IntTrans} arises in the calculation of the transition probability to any state using first-order perturbation theory in quantum mechanics. In the time-dependent integration of the oscillatory terms in the continuum limit, the function $f(x)$ in the Sinc transform represents the product of the coupling strength and density of states, which are typically treated as constants. When this conditions holds, the integration leads to the well-known Fermi's Golden rule \cite{griffiths2018introduction,sakurai2020modern}. However, we shall consider this problem elsewhere, as it lies beyond the scope of the present paper.

Here, we wish to show that, under certain conditions, the Sinc transform has an exact polynomial behavior of order $n$ as the parameter $\lambda$ grows large. We do this by writing the integral in equation \eqref{eqn:IntTrans} as
\begin{align}\label{eqn:IntInf}
    \int_{0}^\infty  \frac{f(x) \,  \sin^n (\lambda x) }{x^n} \text{d}x \, = \, \frac{1}{2} \, \int_{-\infty}^\infty \frac{f(x) \,   \sin^n (\lambda x) }{x^n} \text{d}x.
\end{align}
We shall evaluate the right-hand side by performing a contour integration in the complex plane in which the paths bypass the origin. While maintaining the even parity of $f(x)$, we recognize two cases; when $n$ is even, and when $n$ is odd. 
We find that the integral $I(\lambda)$ results to a terminating asymptotic expansion which behaves as a polynomial provided that $\lambda>\tau/2$ for even values of $n$ and $\lambda>\tau$ for odd values of $n$, which deviates from the typical large-order behavior of asymptotic expansions as seen in the Poincar{\'e} asymptotic expansion.

The rest of the paper is organized as follows. In Section 2, we evaluate the integral in equation \eqref{eqn:IntTrans} for the first case when $n$ is even, and show that the obtained expansion is terminating with polynomial behavior. In Section 3, we again evaluate the integral in equation \eqref{eqn:IntTrans} for the second case when $n$ is odd and also show that we obtain a terminating expansion that exhibits polynomial behavior. We shall also consider examples of the Sinc transform in equation \eqref{eqn:IntTrans} for specific values of $n$, in both Section 2 and 3.

\section{Case 1}

\begin{theorem}
    Let $f(x)$ be an entire function of exponential type $\tau>0$ satisfying $f(-x)=f(x)$. For positive integer $m$ and $\lambda > \tau/2$, we have
    \begin{align}
        \int_{0}^{\infty} \, \frac{f(x) \, \sin^{2m} (\lambda x)}{x^{2m}} \, \text{d}x = \, \frac{(2m)!}{2^{2m} \, (m!)^2} \, \;\backslash\!\!\!\!\!\!\!\backslash\!\!\!\!\!\!\int_{0}^{\infty} \, \frac{f(x)}{x^{2m}} \, \text{d}x   \, - & \,  \frac{\pi \, (-1)^m (2m)!}{2^{2m}} \,  \sum_{l=0}^{m-1} \, C_{l,m}  \, \lambda^{2l+1},
\end{align}
where     
$\,\,\, \;\backslash\!\!\!\!\!\!\!\backslash\!\!\!\!\!\!\int_{0}^{\infty} \, \frac{f(x)}{x^{2m}} \, \text{d}x$ is the finite part  of the divergent integral $\int_{0}^{\infty} \, \frac{f(x)}{x^{2m}} \, \text{d}x$, and 
\begin{align}
    C_{l,m} = \frac{ (-1)^l \, 2^{2l+1} \, f^{(2m-2l-2)} (0) \,}{(2l+1)! \, (2m-2l-2)!} \, \sum_{k=0}^{m-1} \, \frac{(-1)^k \, (m-k)^{2l+1}}{k! \, (2m-k)!} \, .
\end{align}
\end{theorem}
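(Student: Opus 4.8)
The plan is to reduce everything to a single contour integral that bypasses the origin, expand $\sin^{2m}$ into exponentials, and evaluate each resulting piece by residues. First I would use the even parity of both $f$ and $\sin^{2m}(\lambda x)/x^{2m}$ to write $\int_0^\infty = \tfrac12\int_{-\infty}^\infty$, and then expand $\sin^{2m}(\lambda x)=\big((e^{i\lambda x}-e^{-i\lambda x})/2i\big)^{2m}=\frac{(-1)^m}{2^{2m}}\sum_{k=0}^{2m}\binom{2m}{k}(-1)^k e^{i(2m-2k)\lambda x}$, using $(2i)^{2m}=(-1)^m2^{2m}$. The term $k=m$ is the constant $\binom{2m}{m}/2^{2m}=\tfrac{(2m)!}{2^{2m}(m!)^2}$, which is exactly the coefficient multiplying the finite part in the claim; the remaining terms oscillate with frequency $\mu_k=(2m-2k)\lambda$. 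The crucial observation is that although the full integrand is entire at $0$ (the zero of $\sin^{2m}$ cancels $x^{-2m}$), each separated piece $f(x)e^{i\mu_k x}/x^{2m}$ has a pole of order $2m$ there. I would therefore integrate each piece over the contour $C_+$ that follows the real axis but is indented into the upper half-plane around the origin; since the total integrand is entire, $\int_{-\infty}^\infty=\int_{C_+}$ for it, so the decomposition into indented-contour integrals is legitimate.

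For the oscillatory pieces I would close $C_+$ in the upper half-plane when $\mu_k>0$ (i.e. $k<m$) and in the lower half-plane when $\mu_k<0$ (i.e. $k>m$). The decisive and hardest step is showing that the large semicircular arcs vanish: the exponential-type bound $|f(z)|\le e^{(\tau+\epsilon)|z|}$, refined so that the growth is governed by $|\operatorname{Im}z|$, combines with the decay $|e^{i\mu_k z}|=e^{-|\mu_k|\,|\operatorname{Im}z|}$ and the factor $|z|^{-2m}$; a Jordan-type estimate then forces each arc to $0$ precisely when $|\mu_k|>\tau$. Since the smallest nonzero frequency is $|\mu_{m\pm1}|=2\lambda$, all arcs vanish if and only if $\lambda>\tau/2$, which is exactly the hypothesis. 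With the \emph{upward} indentation, closing the $\mu_k>0$ pieces in the upper half-plane encloses no singularity and contributes nothing, whereas closing the $\mu_k<0$ pieces in the lower half-plane (a clockwise loop) encloses the order-$2m$ pole at the origin and yields $\int_{C_+} f(x)e^{i\mu_k x}/x^{2m}\,\text{d}x=-2\pi i\,\operatorname{Res}_{z=0}\big[f(z)e^{i\mu_k z}/z^{2m}\big]$. I expect this arc estimate to be the main obstacle and would lean on the technique of the companion Hankel-transform analysis cited in the introduction.

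The residue is the coefficient of $z^{2m-1}$ in $f(z)e^{i\mu_k z}$, namely $\sum_{j}\frac{f^{(j)}(0)}{j!}\frac{(i\mu_k)^{2m-1-j}}{(2m-1-j)!}$. Evenness of $f$ annihilates the odd-$j$ terms, leaving $j=2p$ with $p=0,\dots,m-1$; because $2m-1-2p$ is odd, the powers of $i$ collapse to a single factor $i$, so each residue is purely imaginary and each contribution real, as it must be. For the $k=m$ piece I would expand $f$ in its even Taylor powers and note that on the indentation semicircle every term $\int z^{2p-2m}\,\text{d}z$ is real (no $z^{-1}$ term occurs, so there is no $i\pi$-residue correction) and exactly cancels the $\rho$-divergences of the two half-line integrals; this identifies $\int_{C_+} f(z)/z^{2m}\,\text{d}z$ with the Hadamard finite part $\int_{-\infty}^\infty f(x)/x^{2m}\,\text{d}x=2\!\int_0^\infty f(x)/x^{2m}\,\text{d}x$, reproducing the finite-part term after multiplication by $\tfrac{(2m)!}{2^{2m}(m!)^2}$.

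It remains to assemble the polynomial part from the $k>m$ terms. Writing $\mu_k=-2(k-m)\lambda$ and substituting the residue, I would reindex by $l=m-1-p$ (so the surviving power is $\lambda^{2l+1}$ and $f^{(2p)}(0)=f^{(2m-2l-2)}(0)$) and by $j=k-m$, turning the frequency sum into $\sum_{j=1}^{m}(-1)^j j^{2l+1}\binom{2m}{m+j}$. Using $\binom{2m}{m+j}/(2m)!=1/\big((m-j)!(m+j)!\big)$ and reflecting $j\mapsto m-k$, this sum becomes $(2m)!\sum_{k=0}^{m-1}\frac{(-1)^k(m-k)^{2l+1}}{k!\,(2m-k)!}$, which is precisely the sum appearing in $C_{l,m}$. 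Tracking the constants $\tfrac12\cdot\tfrac{(-1)^m}{2^{2m}}\cdot(-2\pi i)\cdot i$ together with the factor $(-2^{2l+1})$ from $\mu_k^{2l+1}$ produces the overall $-\tfrac{\pi(-1)^m(2m)!}{2^{2m}}$ prefactor and the $(-1)^l$ inside $C_{l,m}$, matching the stated formula. I would finish by verifying the small case $m=1$, $f\equiv1$ against the classical value $\int_0^\infty \sin^2(\lambda x)/x^2\,\text{d}x=\pi\lambda/2$ as a consistency check.
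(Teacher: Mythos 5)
Your proposal is correct in substance and shares the paper's skeleton---parity reduction, binomial expansion of $\sin^{2m}(\lambda z)$ into exponentials $e^{2i\lambda z(m-k)}$, contour deformation past the order-$2m$ pole at the origin, residue evaluation, and the threshold $\lambda>\tau/2$ coming from the smallest nonzero frequency $2\lambda$---but it differs in two implementation choices. First, you use a single upward-indented contour $C_+$, justified by the observation that evenness of $f$ kills the $z^{-1}$ Taylor coefficient of $f(z)/z^{2m}$, so the indented integral equals the Hadamard finite part with no $i\pi$-residue correction and no symmetrization needed; the paper instead writes $\int_{-\infty}^{\infty}$ as the \emph{average} of the two indented contours $\Gamma^+$ and $\Gamma^-$, which makes the finite-part identification automatic without invoking parity at that step. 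A downstream consequence: in the paper each oscillatory piece contributes a half-residue from the contour on the ``wrong'' side, and even powers of $\lambda$ are cancelled by the combinatorial identity \eqref{eqn:2SumsEven}, whereas in your version only the $k>m$ pieces contribute (full residues) and the even powers die directly because $f^{(\mathrm{odd})}(0)=0$. Both bookkeepings yield the same $C_{l,m}$, as your $m=1$, $f\equiv 1$ check confirms. Second, you close with large semicircles and a Jordan-type estimate, while the paper closes with rectangles of height $\gamma$: it first sends the vertical sides to infinity, then bounds the horizontal segment via the Plancherel--P\'olya/Boas line inequality $\int_{-\infty}^{\infty}|f(x+i\gamma)|\,\mathrm{d}x\le e^{\gamma\tau}\int_{-\infty}^{\infty}|f(x)|\,\mathrm{d}x$, getting $M e^{-\gamma(2\lambda(m-k)-\tau)}\to 0$ as $\gamma\to\infty$, so no pointwise off-axis bound on $f$ is ever needed.

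That last contrast is where your sketch genuinely under-delivers, at precisely the step you flag as hardest. The defining bound $|f(z)|\le e^{(\tau+\epsilon)|z|}$ is useless near the real axis, where $|e^{i\mu_k z}|\simeq 1$; your Jordan estimate needs the refinement $|f(z)|\le C e^{\tau|\operatorname{Im} z|}$, and this is \emph{not} a consequence of exponential type alone (take $f(z)=\cosh(\tau z)$, even and of type $\tau$, which grows like $e^{\tau|x|}$ on $\mathbb{R}$---for which, note, the transform itself diverges). The refinement follows from Phragm\'en--Lindel\"of once $f$ is bounded on the real axis, and some such real-axis decay is implicitly required for convergence anyway; the paper, too, quietly assumes $f(x)\to 0$ as $|x|\to\infty$ and integrability when it invokes the Boas inequality. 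So to close your argument you must either state boundedness on $\mathbb{R}$ and run Phragm\'en--Lindel\"of to legitimize the semicircle estimate, or swap in the paper's rectangle-plus-line-inequality device, which discharges the arc problem with the weaker $L^1$-type hypothesis. With that step repaired, your derivation assembles to the stated prefactor $-\pi(-1)^m(2m)!/2^{2m}$ and to $C_{l,m}$ exactly as claimed.
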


\begin{proof}
Let 
\begin{equation}\label{eqn:IntSinc2m}
    I_1 (\lambda) = \int_{0}^{\infty} \, \frac{f(x) \, \sin^{2m} (\lambda x)}{x^{2m}} \, \text{d}x.
\end{equation}
By hypothesis, $f(-x)=f(x)$ so that the integrand of $I_1 (\lambda)$ is even in $x$ which allows us to write
\begin{align}\label{eqn:PV1}
    \int_{0}^{\infty} \, \frac{f(x) \, \sin^{2m} (\lambda x)}{x^{2m}} \, \text{d}x \, = \, \frac{1}{2} \, \int_{-\infty}^{\infty} \, \frac{f(x) \, \sin^{2m} (\lambda x)}{x^{2m}} \, \text{d}x.
\end{align}
We now proceed with performing a contour integration in the complex plane. The resulting integral on the right hand side of equation \eqref{eqn:PV1} can be written as the average of two contour integrals \cite{galapon2016cauchy},
\begin{equation}\label{eqn:Gamma+-}
    \int_{-\infty}^{\infty} \, \frac{f(x) \, \sin^{2m} (\lambda x)}{x^{2m}} \, \text{d}x \, =  \, \frac{1}{2} \left( \int_{\Gamma^+} \, \frac{f(z) \, \sin^{2m} (\lambda z)}{z^{2m}} \, \text{d}z + \, \int_{\Gamma^-} \, \frac{f(z) \, \sin^{2m} (\lambda z)}{z^{2m}} \, \text{d}z \right),
\end{equation}
where the paths $\Gamma^+$ and $\Gamma^-$ bypass the origin in the upper and lower half-plane respectively, as shown in Figure \ref{fig:Figure1}.
\begin{figure}
\begin{center}
\setlength{\unitlength}{1cm}
\begin{picture}(12,8)
  \thicklines
  \put(1,4){\line(1,0){10}}    
  \put(6,3.25){\line(0,1){4}}    

  \put(2,7){\line(1,0){8}}      
  \put(2,7){\line(0,-1){3}}     
  \put(10,7){\line(0,-1){3}}    

  \qbezier(2,4.05)(6,4.8)(10,4.05)
  \qbezier(2,3.95)(6,3.2)(10,3.95)

  \put(5,4.65){$\Gamma_+$}
  \put(6.45,3){$\Gamma_-$}

  \put(8,4.65){$C^+$}
  \put(8,3.15){$C^-$}

  \put(1.8,3.65){-$\overset{|}{L}$}
  \put(9.9,3.65){$\overset{|}{L}$}
  
  \put(6.25,6.65){$i \gamma$}

  \put(10.2,5.4){$L_1$}  
  \put(5,7.2){$L_2$}   
  \put(1.5,5.4){$L_3$}   

  \put(5.15,4.4){\vector(1,0){.25}}  
  \put(6.4,3.58){\vector(1,0){.25}}  
  \put(10,5.4){\vector(0,1){0.25}}  
  \put(5.3,7){\vector(-1,0){0.5}}  
  \put(2,5.4){\vector(0,-1){.25}}  

\end{picture}
\end{center}
\vspace{-3.5cm}
\caption{The contour $C^+$ consists of the path $ \Gamma_+ $, $ L_1 $, $ L_2 $ and $ L_3$, while the contour $C^-$ consists of $ \Gamma_- $, $ L_1 $, $ L_2 $ and $ L_3$.}
\label{fig:Figure1}
\end{figure}
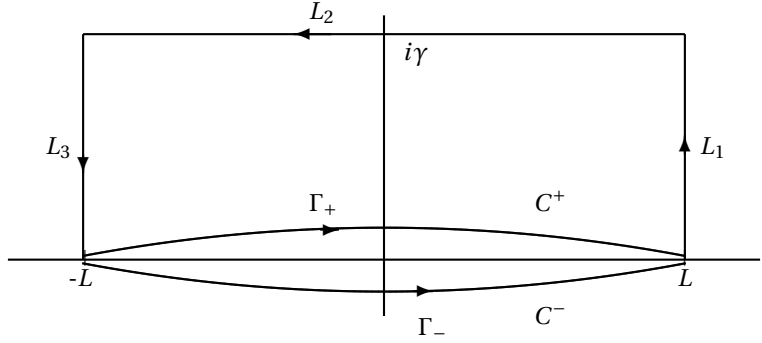
Next, we write the sine function in terms of its complex exponential function form $\sin (\lambda z) \, = (e^{i \lambda z} - e^{- i \lambda z})/2 i$ to obtain
\begin{align}\label{eqn:SineSumBinExp}
    \sin^{2m} (\lambda z) \, = \, & \, \frac{(-1)^m \, (2m)!}{2^{2m}} \, \sum_{k=0}^{2m} \,\frac{(-1)^k}{k! \, (2m-k)!} \ e^{2 i \lambda z (m-k)}.
\end{align}
This result follows from applying the binomial expansion to the expression obtained after using the complex exponential form.
We insert equation \eqref{eqn:SineSumBinExp} back into the right-hand side of equation \eqref{eqn:Gamma+-}, and an interchange on the order of summation and integration is done. Rearranging the terms, we get
\begin{align}\label{eqn:Thm1Exp0}
    \frac{1}{2} \left( \int_{\Gamma^+} \, \frac{f(z) \, \sin^{2m} (\lambda z)}{z^{2m}} \, \text{d}z + \, \int_{\Gamma^-} \, \frac{f(z) \, \sin^{2m} (\lambda z)}{z^{2m}} \, \text{d}z \right) \, = \, & \, \frac{(-1)^m}{2^{2m+1}} \, \sum_{k=0}^{2m} \,\frac{(2m)! \, (-1)^k}{k! \, (2m-k)!} \,  \, \text{d}z & \nonumber \\
     &\hspace{-10mm} \times \left( \, \int_{\Gamma^+} \, \frac{f(z) \, e^{2 i \lambda z (m-k)}}{z^{2m}} \, \text{d}z + \, \int_{\Gamma^-} \, \frac{f(z) \, e^{2 i \lambda z (m-k)}}{z^{2m}} \, \text{d}z \right).
\end{align}
Isolating the $k=m$-th term in the summation of equation \eqref{eqn:Thm1Exp0}, we result to three set of equations. The first set contains a sum from $k=0,\dots,m-1$ while second is the term containing the $k=m$-th term and the third contains a sum from $k=m+1,\dots,2m$. In these equations, we observe that the powers of the exponential function are positive for values of $k=0, \dots, m-1$. However, the powers become negative for values of $k=m+1, \dots, 2m $. This leads now to
\begin{align}\label{eqn:IntExpandGen}
    I_1 (\lambda) = & \, \frac{(2m)!}{2^{2m+1} \, (m!)^2} \, \left( \frac{1}{2} \int_{\Gamma^+} \, \frac{f(z)}{z^{2m}}  \, \text{d}z + \frac{1}{2} \int_{\Gamma^-} \, \frac{f(z)}{z^{2m}}  \, \text{d}z \right) \,
    \, \nonumber \\
    & \, \quad \, + \,  \frac{(-1)^m (2m)!}{2^{2m+2}} \, \sum_{k=0}^{m-1} \,\frac{(-1)^k}{k! \, (2m-k)!} \left( \int_{\Gamma^+}  \, \frac{f(z) \, e^{2 i \lambda z (m-k)} }{z^{2m}} \, \text{d}z  + \, \int_{\Gamma^-}  \, \frac{f(z) \, e^{2 i \lambda z (m-k)} }{z^{2m}} \, \text{d}z \right)\, 
     \, \nonumber \\
    & \, \quad \, + \frac{(-1)^m (2m)!}{2^{2m+2}} \, \sum_{k=m+1}^{2m} \,\frac{(-1)^k}{k! \, (2m-k)!} \, \left( \int_{\Gamma^+}  \, \frac{f(z) \, e^{-2 i \lambda z (k-m)} }{z^{2m}} \, \text{d}z \,  + \, \int_{\Gamma^-}  \, \frac{f(z) \, e^{-2 i \lambda z (k-m)} }{z^{2m}} \, \text{d}z \right).
\end{align}

We now solve the terms enclosed in parenthesis on the first term of equation \eqref{eqn:IntExpandGen}.
Upon examination, we identify these terms as the Hadamard finite part \cite{galapon2016cauchy}
\begin{align} \label{eqn:FPI1}
 \left( \frac{1}{2} \int_{\Gamma^+} \, \frac{f(z)}{z^{2m}}  \, \text{d}z + \frac{1}{2} \int_{\Gamma^-} \, \frac{f(z)}{z^{2m}}  \, \text{d}z \right) = \;\backslash\!\!\!\!\!\!\!\backslash\!\!\!\!\!\!\int_{-\infty}^{\infty} \, \frac{f(x)}{x^{2m}} \, \text{d}x,
\end{align}
of the following divergent integral 
\begin{align}\label{eqn:DivIntGen}
    \int_{-\infty}^{\infty} \, \frac{f(x)}{x^{2m}} \, \text{d}x.
\end{align}
The Hadamard finite part is determined by systematically eliminating the singular contribution arising from the non-integrable singularity at $x=0$. 
This process involves decomposing the integral in equation \eqref{eqn:DivIntGen} into two separate integrals, one over the interval $(-\infty,0)$ and the other over $(0,\infty)$. By treating these regions independently, the divergent behavior at $x=0$ can be symmetrically isolated and removed.
The singularity at $x=0$ is then excluded by modifying the integration limits to the intervals $(-\infty,-\epsilon)$ and  $(\epsilon,\infty)$ for some positive $\epsilon$. The resulting integral is expressed in the form
\begin{equation} \label{eqn:FPIDecompGen}
    \int_{-\infty}^{-\epsilon} \, \frac{f(x)}{x^{2m}} \, \text{d}x + \int_{\epsilon}^{\infty} \, \frac{f(x)}{x^{2m}} \, \text{d}x = \, \mathfrak{D}_{\epsilon} \,  + \, \mathfrak{C}_{\epsilon},
\end{equation}
where $\mathfrak{D}_{\epsilon}$ 
consists of the algebraic powers of $\epsilon$ and $\ln(\epsilon)$, which diverge in the limit as $\epsilon \to 0$ whereas $\mathfrak{C}_{\epsilon}$ consists of the terms that converge in the same limit \cite{galapon2017problem}. Then we have
\begin{equation}
    \, \;\backslash\!\!\!\!\!\!\!\backslash\!\!\!\!\!\!\int_{-\infty}^{\infty} \, \frac{f(x)}{x^{2m}} \, \text{d}x  \, = \, \lim_{\epsilon \to 0} \, \mathfrak{C}_{\epsilon}.
\end{equation}

The computation of the Hadamard finite part is simplified further by transforming the divergent integral from an integral over $(-\infty,\infty)$ to one over $(0,\infty)$. By substituting $x \to -x$ in the first term on the left-hand side of equation \eqref{eqn:FPIDecompGen} where we note that $f(-x)=f(x)$, we obtain
\begin{equation}\label{eqn:FPIDecompEpsilonGen}
    \int_{-\infty}^{-\epsilon} \, \frac{f(x)}{x^{2m}} \, \text{d}x + \int_{\epsilon}^{\infty} \, \frac{f(x)}{x^{2m}} \, \text{d}x = 2 \,    \int_{\epsilon}^{\infty} \, \frac{f(x)}{x^{2m}} \, \text{d}x.
\end{equation}
The integral on the right-hand side of equation \eqref{eqn:FPIDecompEpsilonGen} corresponds now to the divergent integral,
\begin{align}
    \int_{0}^{\infty} \, \frac{f(x) }{ x^{2m}} \, \text{d}x,
\end{align}
except that the singularity at $x=0$ has been removed by replacing the lower limit of the integration by some positive $\epsilon$. Further, we can write $\int_{\epsilon}^{\infty} \, \frac{f(x) }{ x^{2m}} \, \text{d}x = D_{\epsilon} + C_{\epsilon}$
where $D_{\epsilon}$ represents the terms that diverge in the limit as $\epsilon \to 0$ while $C_{\epsilon}$ represents the terms that converge in the same limit \cite{galapon2017problem}. 
Therefore, the Hadamard finite part of the divergent integral is expressed as
\begin{equation}
\;\backslash\!\!\!\!\!\!\!\backslash\!\!\!\!\!\!\int_{0}^{\infty} \, \frac{f(x)}{x^{2m}} \, \text{d}x = \lim_{\epsilon \to 0} \, C_{\epsilon}.
\end{equation}
It is clear that $\mathfrak{C}_{\epsilon} = 2 C_{\epsilon}$ so we finally obtain
\begin{equation}\label{eqn:FPIevenGen}
     \;\backslash\!\!\!\!\!\!\!\backslash\!\!\!\!\!\!\int_{-\infty}^{\infty} \, \frac{f(x)}{x^{2m}} \, \text{d}x= 2 \,\,  \;\backslash\!\!\!\!\!\!\!\backslash\!\!\!\!\!\!\int_{0}^{\infty} \, \frac{f(x)}{x^{2m}} \, \text{d}x .
\end{equation}

We now evaluate the integrals on the second term of equation \eqref{eqn:IntExpandGen}.
Here, the contour integration is on the upper half-plane. In Figure \ref{fig:Figure1}, we consider the contour $C^+$ where
\begin{align}
    \oint_{C^+} \frac{f(z) \, e^{2 i \lambda z (m-k)}}{z^{2m}} \, \text{d}z \, & \,  = \, \int_{\Gamma^+} \frac{f(z) \, e^{2 i \lambda z (m-k)}}{z^{2m}} \, \text{d}z \, + \int_{L_1} \, 
    \frac{f(L + i y) \, e^{2 i \lambda (L + i y) (m-k)}}{(L + i y)^{2m}}  \, \text{d}y \, \nonumber \\
    & \quad \, + \int_{L_2}  \frac{f(x + i \gamma) \, e^{2 i \lambda (x + i \gamma) (m-k)}}{(x + i \gamma)^{2m}} \, \text{d}x \, +\int_{L_3} \, 
    \frac{f(-L + i y) \, e^{2 i \lambda (-L + i y) (m-k)}}{(-L + i y)^{2m}}  \, \text{d}y 
    \nonumber \\
    & \, = 0.
\end{align}
This integral is equal to zero because the contour $C^+$ encloses no poles of the integrand. Next, we estimate the contributions along the paths $L_1, L_2$ and $L_3$. For $L_1$, we have
\begin{equation}
    \bigg|  \int_{0}^{\gamma} \, f(L + i y) \,
    \frac{e^{2 i \lambda (L + i y) (m-k)}}{(L + i y)^{2m}}  \, \text{d}y \bigg| \leq   \int_{0}^{\gamma} \, \big| f(L + i y) \big| \,\big|  \frac{e^{2 i \lambda (L + i y) (m-k)}}{(L + i y)^{2m}} \big| \, \text{d}y \leq \, e^{- 2 \lambda (m-k)} \, \int_{0}^{\gamma} \, \big| f(L + i y) \big| \, \text{d}y.
\end{equation}
Since $f(x)$ is an exponential type function and $f(x)$ vanishes as $|x|\to \infty$, the integral $\int_0^{\gamma} |f(L+iy)| \mbox{d}y$ vanishes as $L\rightarrow\infty$ \cite{boas1954entire}. Then the integral along the path-$L_1$ vanishes in the limit as $L\to\infty$, and the same case also happens for the path along $L_3$ as $L \to -\infty$ \cite{boas1954entire, schmeisser2007approximation}.
For the path along $L_2$, we consider its upper bound where
\begin{equation}\label{eqn:L2estimate}
    \bigg| \int_{L}^{-L} \, f(x + i \gamma) \, \frac{e^{2 i \lambda (x + i \gamma) (m-k)}}{(x + i \gamma)^{2m}} \, \text{d}x \bigg| \,   \, \leq \, \int_{-\infty}^{\infty} \, \big|  f(x + i \gamma) \big| \, \big| \frac{e^{2 i \lambda (x + i \gamma) (m-k)}}{(x + i \gamma)^{2m}} \big|  \, \text{d}x \, \leq \, e^{-2 \gamma  \lambda (m-k)} \, \, \int_{-\infty}^{\infty} \, \frac{\big|  f(x + i \gamma) \, \big|}{(x^2  + \gamma^2)^{m}}  \, \text{d}x.
\end{equation}
On the right hand side of equation \eqref{eqn:L2estimate}, the integrand is of exponential type, so we apply the inequality
$\int_{-\infty}^{\infty} |g(x+iy)|^p \mbox{d}x \leq e^{p \tau |y| } \int_{-\infty}^{\infty}   |g(x)|^p \mbox{d}x$ where $g(z)$ is an exponential function of type $\tau$ \cite{boas1954entire}. For $p=1$, we obtain 
\begin{equation}
    e^{-2 \gamma  \lambda (m-k)} \, \int_{-\infty}^{\infty} \, \frac{\big|  f(x + i \gamma) \, \big|}{(x^2  + \gamma^2)^{m}}  \, \text{d}x \,  \leq \, e^{-2 \gamma  \lambda (m-k)} \, e^{\gamma \tau} \, \int_{-\infty}^{\infty} \, \frac{\big|  f(x) \, \big|}{(x^2 +   \gamma^2)^{m}}  \, \text{d}x \, \leq \, M  \, e^{- \gamma (2 \lambda (m-k) - \tau)},
\end{equation}
for $M<\infty$. When $\lambda > \tau/2(m-k)$ for $k=0,\dots,m-1$, the bound on the integral along $L_2$ can be made arbitrarily small by extending $L_2$ further upward, corresponding to an arbitrarily large $\gamma'>\gamma$. This implies that the integral along $L_2$ vanishes provided that the condition 
\begin{align}\label{eqn:condition1}
\lambda > \frac{\tau}{2(m-k)}, \,\,\,\, \mbox{for} \,\,\,\, k=0,\dots,m-1,
\end{align}
is satisfied. The integral along $L_2$ vanishes because $f(z)$ is an entire exponential function of type $\tau$.
With this condition, we finally get 
\begin{align}
    \int_{\Gamma^+} \, \frac{f(z) \, e^{2 i \lambda z (m-k)}}{z^{2m}} \, \text{d}z \, = \, 0.
\end{align}

We now consider the contour $C^-$ in the upper half plane which includes now the path along $\Gamma^-$ in Figure 1. We have
\begin{align}
    \oint_{C^-} \, \frac{f(z) \, e^{2 i \lambda z (m-k)}}{z^{2m}} \, \text{d}z \, & = \, \int_{\Gamma^-} \frac{f(z) \, e^{2 i \lambda z (m-k)}}{z^{2m}} \, \text{d}z \, + \int_{L_1} \, f(L + i y) \,
    \frac{e^{2 i \lambda (L + i y) (m-k)}}{(L + i y)^{2m}}  \, \text{d}y \, \nonumber \\
    & \quad \, + \int_{L_2} f(x + i \gamma) \, \frac{e^{2 i \lambda (x + i \gamma) (m-k)}}{(x + i y)^{2m}} \, \text{d}x \, +\int_{L_3} \, f(-L + i y) \,
    \frac{e^{2 i \lambda (-L + i y) (m-k)}}{(-L + i y)^{2m}}  \, \text{d}y \, \nonumber \\
    & \, = \, 2 \pi i \, \text{Res} \left[\frac{f(z) \, e^{2 i \lambda z (m-k)} }{z^{2m}},z=0 \right].
\end{align}
The integration encloses the pole of the integrand at $z=0$, and thus, it evaluates to the sum of the residues at this pole.
Using the same estimate for the paths along $L_1, L_2$ and $L_3$ where the contributions from these paths vanish as $L\to \pm \infty$ \cite{boas1954entire,schmeisser2007approximation}, we obtain
\begin{equation}\label{eqn:Gamma-up}
    \int_{\Gamma^-} \, \frac{f(z) \, e^{2 i \lambda z (m-k)} }{z^{2m}} \, \text{d}z \,   \, = \, 2 \pi i \, \text{Res} \left[\frac{f(z) \, e^{2 i \lambda z (m-k)} }{z^{2m}},z=0 \right].
\end{equation}
Thus, the integrals in the second term of equation \eqref{eqn:IntExpandGen} become
\begin{align}
    \int_{\Gamma^+} \, \frac{f(z) \, e^{2 i \lambda z (m-k)}}{z^{2m}} \, \text{d}z \, + \, \int_{\Gamma^-}  \, \frac{f(z) \, e^{2 i \lambda z (m-k)} }{z^{2m}} \, \text{d}z \, =  \, 2 \pi i \, \text{Res} \left[\frac{f(z) \, e^{2 i \lambda z (m-k)} }{z^{2m}},z=0 \right].
\end{align}
By applying the Leibniz rule for differentiation to compute the derivatives, we determine the residues and obtain
\begin{align} \label{eqn:Residue1}
    \text{Res} \left[ \frac{f(z) \, e^{2 i \lambda z (m-k)}}{z^{2m}} \,  , z=0 \right] & \, =  \, \, \sum_{s=0}^{2m-1} \, \frac{\, f^{(2m-1-s)} (0)}{s! \, (2m-1-s)!} \, (2 i \lambda (m-k))^s. 
\end{align}
Finally, we get now the following values for the second term of equation \eqref{eqn:IntExpandGen}
\begin{align}\label{eqn:UpperHPfinal}
    \int_{\Gamma^+} \, \frac{f(z) \, e^{2 i \lambda z (m-k)}}{z^{2m}} \, \text{d}z \, + \, \int_{\Gamma^-}  \, \frac{f(z) \, e^{2 i \lambda z (m-k)} }{z^{2m}} \, \text{d}z \, = \, 2 \pi i \, \sum_{s=0}^{2m-1} \, \frac{f^{(2m-1-s)} (0)}{s! \, (2m-1-s)!} \, (2 i \lambda (m-k))^s .
\end{align}

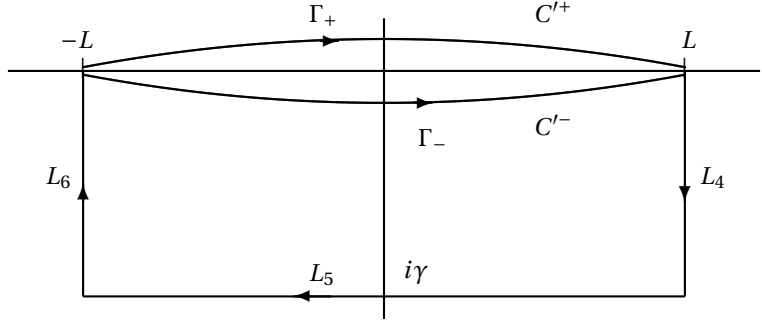
\begin{figure}
\vspace{-11em}
\begin{center}
\setlength{\unitlength}{1cm}
\begin{picture}(12,8)
  \thicklines
  \put(1,4){\line(1,0){10}}    
  \put(6,0.7){\line(0,1){4}}    

  \put(2,1){\line(1,0){8}}     
  \put(2,4){\line(0,-1){3}}     
  \put(10,4){\line(0,-1){3}}    

  \qbezier(2,4.05)(6,4.8)(10,4.05)
  \qbezier(2,3.95)(6,3.2)(10,3.95)

  \put(5,4.65){$\Gamma_+$}
  \put(6.45,3){$\Gamma_-$}

  \put(8,4.65){$C'^+$}
  \put(8,3.15){$C'^-$}

  \put(1.95,3.95){$|$}
  \put(9.95,3.95){$|$}
  \put(1.7,4.3){$-L$}
  \put(9.95,4.3){$L$}
  
  \put(6.25,1.25){$i \gamma$}

  \put(10.2,2.5){$L_4$}  
  \put(5,1.2){$L_5$}   
  \put(1.5,2.5){$L_6$}   

  \put(5.15,4.4){\vector(1,0){.25}}  
  \put(6.4,3.58){\vector(1,0){.25}}  
  \put(10,2.5){\vector(0,-1){0.25}}  
  \put(5.3,1){\vector(-1,0){0.5}}  
  \put(2,2.25){\vector(0,1){.25}}  

\end{picture}
\end{center}
\vspace{-1cm}
\caption{The contour $C'^+$ consists of the path $ \Gamma_+ $, $ L_4 $, $ L_5 $ and $ L_6$, while the contour $C'^-$ consists of $ \Gamma_- $, $ L_4 $, $ L_5 $ and $ L_6$.}
\label{fig:Figure2}
\end{figure}
Let us now evaluate the integrals in the third term of equation \eqref{eqn:IntExpandGen} by  performing a contour integration in the lower half-plane. We consider a contour $C'^+$ as shown in Figure \ref{fig:Figure2} where 
\begin{align}
    \oint_{C'^+} \, \frac{f(z) \, e^{- 2 i \lambda z (k-m)}}{z^{2m}}  \, \text{d}z \, & \, = \, \int_{\Gamma^+} \, \frac{f(z) \, e^{- 2 i \lambda z (k-m)}}{z^{2m}}  \, \text{d}z \, + \int_{L_4} \, 
    \frac{f(L - i y) \, e^{2 i \lambda (L -i y) (m-k)}}{(L - i y)^{2m}}  \, \text{d}y \, \nonumber \\
    & \, \quad + \, \int_{L_5}  \,  \frac{f(x - i \gamma) \, e^{- 2 i \lambda (x - i \gamma) (k-m)}}{(x + i y)^{2m}} \, \text{d}x \, +\int_{L_6} \, 
    \frac{f(-L - i y) \, e^{2 i \lambda (-L -i y) (m-k)}}{(-L - i y)^{2m}}  \, \text{d}y \nonumber \\
    & \,= - 2 \pi i \, \text{Res}\left[\frac{f(z) e^{- 2 i \lambda z (k-m)}}{z^{2m}},z=0 \right].
\end{align}
Since the contour $C'^+$ encloses the pole of the integrand at $z=0$, the integral evaluates to the residue at this pole.
Using the same estimate for $L_1$, the contribution along path $L_4$ vanishes as $L\to\infty$ \cite{boas1954entire}. Similarly, the contribution along the path $L_6$ also vanishes as $L\to -\infty   $\cite{boas1954entire,schmeisser2007approximation}. We now estimate the contribution along the path $L_5$, where
\begin{equation}
    \bigg| \int_{L}^{-L} \, f(x - i \gamma) \, \frac{e^{- 2 i \lambda (x - i \gamma) (k-m)}}{(x + i y)^{2m}} \, \text{d}x \bigg| \,   \, \leq \, \int_{-\infty}^{\infty} \, \big|  f(x - i \gamma) \big| \, \big| \frac{e^{- 2 i \lambda (x - i \gamma) (k-m)}}{(x - i y)^{2m}} \big|  \, \text{d}x \, \leq \, M \,  e^{- \gamma (2 \lambda (k-m) - \tau) }  .\label{eqn:InEqual2}
\end{equation}
Again here, we have used the property of the exponential type function and by the inequality in equation \eqref{eqn:InEqual2}, the contribution along the path $L_5$ vanishes provided 
\begin{align}\label{eqn:condition2}
    \lambda > \frac{\tau}{2(k-m)}, \,\,\,\, \mbox{for} \,\,\,\, k=m+1,\dots,2m,
\end{align}
is satisfied. We then arrive with
\begin{align}\label{eqn:Gamma+down}
    \int_{\Gamma^+} \, \frac{f(z) \, e^{- 2 i \lambda z (k-m)}}{z^{2m}}  \, \text{d}z \,   \, = - \, 2 \pi i \, \text{Res} \left[\frac{f(z) \, e^{- 2 i\lambda z (k-m)}}{z^{2m}} ,z=0 \right].
\end{align}
Also, we consider the contour $C'^-$ in the lower half-plane as shown in Figure \ref{fig:Figure2}, which include the path $\Gamma^-$. We have
\begin{align}
    \oint_{C'^-} \, \frac{f(z) \, e^{- 2 i \lambda z (k-m)}}{z^{2m}}  \, \text{d}z \, & = \, \int_{\Gamma^-} \, \frac{f(z) \, e^{- 2 i \lambda z (k-m)}}{z^{2m}}  \, \text{d}z \, + \int_{L_4} \, 
    \frac{f(L - i y) \, e^{2 i \lambda (L -i y) (m-k)}}{(L - i y)^{2m}}  \, \text{d}y \, \nonumber \\
    & \, \quad + \, \int_{L_5}  \,  \frac{f(x - i \gamma) \, e^{- 2 i \lambda (x - i \gamma) (k-m)}}{(x - i y)^{2m}} \, \text{d}x \, +\int_{L_6} \, 
    \frac{f(-L - i y) \, e^{2 i \lambda (-L -i y) (m-k)}}{(-L - i y)^{2m}}  \, \text{d}y \, = 0,
\end{align}
where no pole is enclosed in this contour. Using the same estimate made for the paths along $L_4, L_5$ and $L_6$ as $L\to\pm\infty $ \cite{boas1954entire,schmeisser2007approximation}, we see that
\begin{align}
    \int_{\Gamma^-} \,  \frac{f(z) \, e^{- 2 i \lambda z (k-m)}}{z^{2m}}  \, \text{d}z \, \, = 0.
\end{align}
Going back to equation \eqref{eqn:Gamma+down} and solving for the residue, we obtain the following result
\begin{align}
    \text{Res} \left[\frac{f(z) \, e^{- 2 i\lambda z (k-m)}}{z^{2m}} ,z=0 \right] & \, = \, \, \sum_{s=0}^{2m-1} \, \frac{ f^{(2m-1-s)} (0)}{s! \, (2m-1-s)!} \, (- 2 i \lambda (k-m))^s .
\end{align}
Thus, the integrals on the third term in equation \eqref{eqn:IntExpandGen}
are equal to 
\begin{align}\label{eqn:LowerHPfinal}
     \int_{\Gamma^+} \, \frac{f(z) \, e^{- 2 i \lambda z (k-m)}}{z^{2m}}  \, \text{d}z + \int_{\Gamma^-} \,  \frac{f(z) \, e^{- 2 i \lambda z (k-m)}}{z^{2m}}  \, \text{d}z = \, - 2 \pi i \, \sum_{s=0}^{2m-1} \, \frac{f^{(2m-1-s)} (0)}{s! \, (2m-1-s)!} \, (- 2 i \lambda (k-m))^s .
\end{align}

Combining the results from equations \eqref{eqn:FPIevenGen}, \eqref{eqn:UpperHPfinal} and \eqref{eqn:LowerHPfinal}, and simplifying the expressions, we arrive with
\begin{align}\label{eqn:I1S}
        I_1 (\lambda) = \, \frac{(2m)!}{2^{2m} \, (m!)^2} \, \;\backslash\!\!\!\!\!\!\!\backslash\!\!\!\!\!\!\int_{0}^{\infty} \, \frac{f(x)}{x^{2m}} \, \text{d}x   \, + & \, \,  \frac{\pi i \, (-1)^m (2m)!}{2^{2m +1}} \, \, \sum_{s=0}^{2m-1} \, \frac{(2 i \lambda )^s}{s! \, (2m-1-s)!}  \, f^{(2m-1-s)} (0) \nonumber \\
        & \, \times \, \left[ \sum_{k=0}^{m-1} \, \frac{(-1)^k \, (m-k)^s }{k! \, (2m-k)!} -\sum_{k=m+1}^{2m} \,\frac{(-1)^k \, (m-k)^s }{k! \, (2m-k)!} 
     \right].
\end{align}
Rewriting the sum over $k=m+1,\dots,2m$ in the last line of equation \eqref{eqn:I1S} as a sum over $k=0,\dots,m-1$, we obtain
\begin{align}\label{eqn:2SumsEven}
    \sum_{k=0}^{m-1} \, \frac{(-1)^k \, (m-k)^s }{k! \, (2m-k)!} -\sum_{k=0}^{m-1} \,\frac{(-1)^k \, (-1)^s  (m-k)^s }{k! \, (2m-k)!} = 2 \sum_{k=0}^{m-1} \, \frac{(-1)^k \, (m-k)^{s}}{k! \, (2m-k)!},
\end{align}
where the left-hand side of equation \eqref{eqn:2SumsEven} is equal to zero when $s$ is even and nonzero when $s$ is odd. Thus, we set $s=2l+1$ in equation \eqref{eqn:I1S}.

Lastly, the least upper bound of the right-hand side of the inequalities \eqref{eqn:condition1} and \eqref{eqn:condition2} is $\tau/2$. Hence, the condition that simultaneously satisfies \eqref{eqn:condition1} and \eqref{eqn:condition2} is $\lambda>\tau/2$.
The proof of Theorem 1 is therefore complete.
\end{proof}

\subsection{Example}
Let us consider the following integral
\begin{equation}\label{eqn:ExampleSin4}
  \, \int_0^{\infty} \, \frac{\sin\sqrt{\tau^2 x^2 + \sigma^2}}{\sqrt{\tau^2 x^2 + \sigma^2}} \, \frac{\sin^4 (\lambda x)}{x^4} \, \text{d}x,
\end{equation}
where $\sin\sqrt{\tau^2 x^2 + \sigma^2}/\sqrt{\tau^2 x^2 + \sigma^2}$ is an entire exponential function of type $\tau$. 
The integral in equation \eqref{eqn:ExampleSin4} is an example of Theorem 1 for $m=2$. Applying the results from Theorem 1, we have
\begin{align}\label{eqn:example1}
        \int_{0}^{\infty} \, \frac{\sin\sqrt{\tau^2 x^2 + \sigma^2}}{\sqrt{\tau^2 x^2 + \sigma^2}} \, \frac{\sin^{4} (\lambda x)}{x^{4}} \, \text{d}x = & \, \, \frac{3}{8} \, \,\, \;\backslash\!\!\!\!\!\!\!\backslash\!\!\!\!\!\!\int_{0}^{\infty} \, \frac{\sin\sqrt{\tau^2 x^2 + \sigma^2}}{x^4 \, \sqrt{\tau^2 x^2 + \sigma^2}}  \, \text{d}x   \, + \,  \frac{3 \, \pi \, \lambda }{24} \, \left( \frac{d^2}{dx^2} \frac{\sin\sqrt{\tau^2 x^2 + \sigma^2}}{\sqrt{\tau^2 x^2 + \sigma^2}} \Bigg|_{x=0} \right) \nonumber \\
        &  \quad + \,  \frac{\pi \, \lambda^3 }{3} \, \left( \frac{\sin\sqrt{\tau^2 x^2 + \sigma^2}}{\sqrt{\tau^2 x^2 + \sigma^2}} \Bigg|_{x=0} \right).
\end{align}
To evaluate the finite part integral in the first term of the right hand side of equation \eqref{eqn:example1}, one may apply the method of Mellin transforms
\cite{galapon2023regularized}. 
The corresponding Mellin transform is given by
\begin{equation}
\mathcal{M}[f;s] = \, \int_0^{\infty} x^{s-1} \,
\frac{\sin\sqrt{\tau^2 x^2 + a^2}}{\sqrt{\tau^2 x^2 + a^2}} \, \text{d}x  \, = \, 2^{(s-3)/2} \tau \sqrt{\pi} \left( \frac{\sigma}{\tau^2}\right)^{(s-1)/2} \, \Gamma \left(s/2\right) \, J_{(1-s)/2} (\sigma),
\end{equation}
for $\text{Re} \, a,b>0$ and $0<\text{Re}(s)<2$ \cite[p.43,2.4.2.30]{brychkov2018handbook}. The right hand side represents the analytic continuation of the Mellin transform beyond its original strip of analyticity, introducing simple poles at $s=0, -2, -4, \dots$. The finite-part integral is obtained by locating the point $s=-3$, which corresponds to the divergent integral. Since $s=-3$ is a regular point in the analytic continuation, its value at this point directly yields the desired finite-part integral,
\begin{equation}\label{eqn:result3}
     \mathcal{M}[f;s=-3] = \;\backslash\!\!\!\!\!\!\!\backslash\!\!\!\!\!\!\int_{0}^{\infty} \, \frac{\sin\sqrt{\tau^2 x^2 + \sigma^2}}{x^4 \, \sqrt{\tau^2 x^2 + \sigma^2}}  \, \text{d}x   =  \frac{\pi \, \tau^3 }{6 \, \sigma^2} \, J_2 (\sigma).
\end{equation}
Using the result in equation \eqref{eqn:result3} and computing for the derivatives in equation \eqref{eqn:example1}, we arrive with
\begin{align}\label{eqn:example1result}
        \int_{0}^{\infty} \, \frac{\sin\sqrt{\tau^2 x^2 + \sigma^2}}{\sqrt{\tau^2 x^2 + \sigma^2}} \, \frac{\sin^{4} (\lambda x)}{x^{4}} \, \text{d}x = & \, \frac{\pi \, \tau^3 }{16 \, \sigma^2} \, J_2 (\sigma)   \, + \,  \frac{\pi \tau^2 \lambda }{8 \sigma^3} \, ( \sigma \cos(\sigma) - \sin (\sigma)) + \,  \frac{\pi \lambda^3 \, }{3 \sigma} \sin (\sigma),\quad \lambda > \tau/2.
\end{align}

In equation \eqref{eqn:example1result}, we have obtained positive orders of $\lambda$ as $\lambda \to \infty$. Further, the behavior of the integrand is illustrated in Figure \ref{fig:Figure3}. We observe that the area under the curve of the integrand increases as $\lambda$ increases. This shows consistency with the polynomial behavior of the obtained expansion which behaves as a polynomial with positive powers of $\lambda$ as $\lambda$ grows large.
\begin{figure}
\centering
\includegraphics[width=0.8\textwidth]{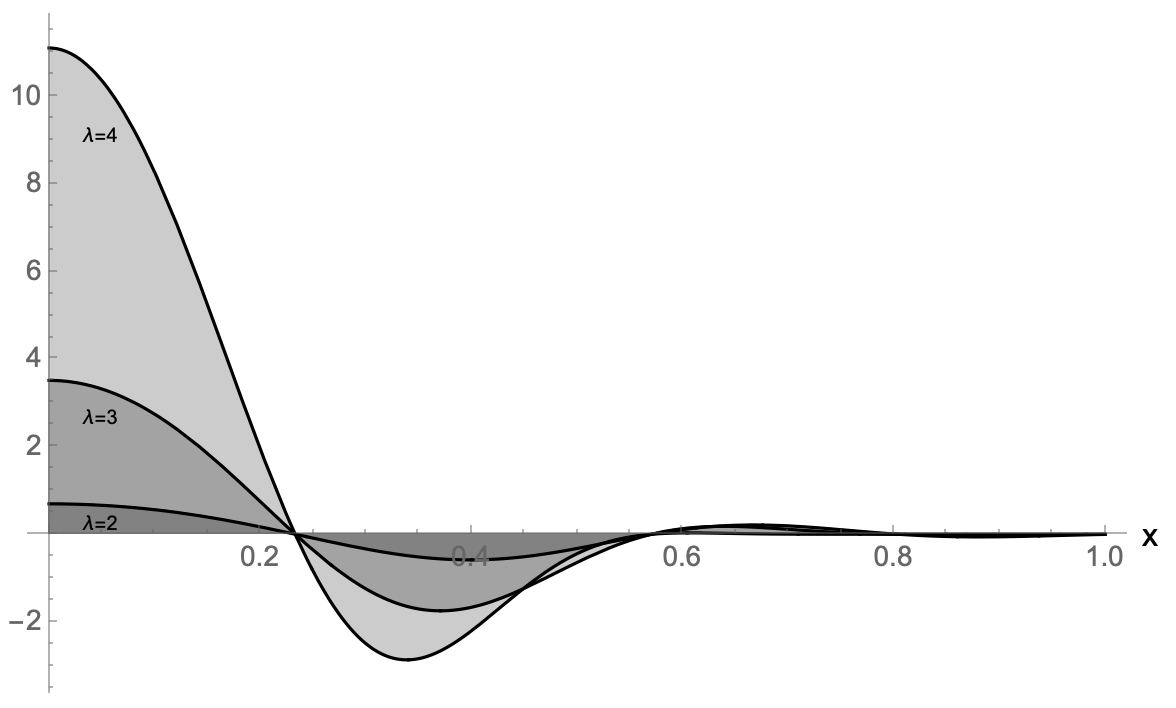}
\caption{The plot of the integrand, $\frac{\sin\sqrt{\tau^2 x^2 + \sigma^2}}{\sqrt{\tau^2 x^2 + \sigma^2}} \, \frac{\sin^{4} (\lambda x)}{x^{4}}$ for $\lambda =2,3,4$.}
\label{fig:Figure3}
\end{figure}

\section{Case 2}
\begin{theorem}
    Let $f(x)$ be an entire function of exponential type $\tau>0$ satisfying $f(-x)=f(x)$. For nonnegative integer $m$, and $\lambda > \tau$, we have
    \begin{align}
        \int_{0}^{\infty} \, \frac{f(x) \, \sin^{2m+1} (\lambda x)}{x^{2m+1}} \, \text{d}x = & \, \frac{(-1)^m \pi  (2m+1)!}{2^{2m+1}} \, \sum_{l=0}^{m} \, D_{l,m}  \, \lambda^{2l},
\end{align}
where
\begin{align}
    D_{l,m} = \frac{(-1)^l \,  f^{(2m-2l)} (0)}{(2l)! \, (2m-2l)!} \, \sum_{k=0}^{m} \, \frac{(-1)^k \, (2m-2k+1)^{2l}}{k! \, (2m+1-k)!}.
\end{align}
\end{theorem}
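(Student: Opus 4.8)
The plan is to follow the same contour-integration machinery developed for Theorem 1, adapted to the odd exponent $2m+1$. First I would note that $\sin^{2m+1}(\lambda x)/x^{2m+1}$ is even in $x$ (an odd power of the odd function $\sin(\lambda x)$ divided by the odd $x^{2m+1}$), so together with $f(-x)=f(x)$ the integrand is even and $\int_0^\infty = \tfrac12\int_{-\infty}^\infty$. I would then write this line integral as the average of the two contour integrals over $\Gamma^+$ and $\Gamma^-$ bypassing the origin, exactly as in \eqref{eqn:Gamma+-}, and expand $\sin^{2m+1}(\lambda z) = (2i)^{-(2m+1)}\sum_{k=0}^{2m+1}\binom{2m+1}{k}(-1)^k e^{i\lambda z(2m+1-2k)}$ by the binomial theorem, then interchange sum and integral.

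The essential structural difference from Case 1 is that the exponent coefficient $2m+1-2k$ is always odd, hence never zero; there is no central $k=m$ term producing the exponent-free integrand $f(z)/z^{2m+1}$, and consequently \emph{no} Hadamard finite-part contribution appears, which is why the statement of Theorem 2 is purely polynomial. I would split the sum into the terms $k=0,\dots,m$, for which $2m+1-2k>0$, and the terms $k=m+1,\dots,2m+1$, for which $2m+1-2k<0$. The former are closed in the upper half-plane using the contours $C^\pm$ of Figure \ref{fig:Figure1} and the latter in the lower half-plane using $C'^\pm$ of Figure \ref{fig:Figure2}. The estimates along $L_1,\dots,L_6$ are identical to those in the proof of Theorem 1, the only change being that the relevant exponential bound is $e^{-\gamma(\lambda|2m+1-2k|-\tau)}$. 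Since the smallest value of $|2m+1-2k|$ is $1$, attained at $k=m$ in the upper sum and at $k=m+1$ in the lower sum, the arcs $L_2$ and $L_5$ now vanish precisely under the single condition $\lambda>\tau$, which is where the $\tau$ (rather than $\tau/2$) threshold enters.

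With the side arcs gone, each surviving contour integral reduces to $2\pi i$ times the residue of $f(z)e^{i\lambda z(2m+1-2k)}/z^{2m+1}$ at the pole of order $2m+1$ at $z=0$ (with a relative minus sign for the lower-half terms, as in \eqref{eqn:Gamma+down}), which by the Leibniz rule equals $\sum_{s=0}^{2m}\frac{f^{(2m-s)}(0)}{s!\,(2m-s)!}(i\lambda(2m+1-2k))^s$. The final step is to re-index the lower sum by $k\mapsto 2m+1-k$ so that it, too, runs over $k=0,\dots,m$; under this relabeling $\binom{2m+1}{k}$ is invariant while $(-1)^k$ picks up a sign and $2m+1-2k\mapsto -(2m+1-2k)$, so the two sums combine with the parity factor $1+(-1)^s$. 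This kills the odd-$s$ terms and doubles the even ones, so I set $s=2l$, use $i^{2l}=(-1)^l$, and read off the coefficients $D_{l,m}$ together with the even powers $\lambda^{2l}$.

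I expect the main obstacle to be bookkeeping rather than conceptual: one must track the powers of $i$ coming from the prefactor $(2i)^{-(2m+1)}=(-1)^m/(2^{2m+1}i)$, from the residue's $(i\lambda\,\cdot)^s$, and from the $2\pi i$ of the residue theorem, and confirm that they combine to the manifestly real factor $(-1)^m\pi/2^{2m+1}$, as they must since the original integral is real. The second delicate point is verifying that the parity selection here is the \emph{reverse} of Case 1 (even $s$ survives, giving even powers of $\lambda$, whereas odd $s$ survived in Theorem 1), which is a direct consequence of the extra sign that $(-1)^k$ acquires under the odd relabeling $k\mapsto 2m+1-k$. Once these sign ledgers are reconciled, the identification with $D_{l,m}$ is immediate.
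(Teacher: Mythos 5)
Your proposal is correct and follows essentially the same route as the paper's proof: evenness to reduce to $\tfrac12\int_{-\infty}^{\infty}$, the average of the $\Gamma^{+}$ and $\Gamma^{-}$ contour integrals, binomial expansion of $\sin^{2m+1}(\lambda z)$, closure of the $k=0,\dots,m$ terms in the upper half-plane and the $k=m+1,\dots,2m+1$ terms in the lower half-plane with the same $L_j$ estimates (so that $|2m+1-2k|\geq 1$ yields the single threshold $\lambda>\tau$), residues at the order-$(2m+1)$ pole via the Leibniz rule, and the re-indexing $k\mapsto 2m+1-k$ producing the even-$s$ parity selection $1+(-1)^s$ and the powers $\lambda^{2l}$. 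Your explicit observations --- that the always-odd exponent $2m+1-2k$ leaves no central exponent-free term and hence no Hadamard finite-part contribution, and that the sign ledger combines to the real prefactor $(-1)^m\pi/2^{2m+1}$ --- are exactly the structural points on which the paper's proof of Theorem 2 rests.
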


\begin{proof}
The proof of Theorem 2 is identical to that of Theorem 1.
Let
\begin{equation}\label{eqn:example2}
    I_2 (\lambda) = \int_{0}^{\infty} \, \frac{f(x) \, \sin^{2m+1} (\lambda x)}{x^{2m+1}} \, \text{d}x.
\end{equation}
When $f(-x)=f(x)$, the integrand of $I_2 (\lambda)$ becomes even in $x$ so we can write the integral in equation \eqref{eqn:example2} as
\begin{align}\label{eqn:IntSinc2m+1}
    \int_{0}^{\infty} \, \frac{f(x) \, \sin^{2m+1} (\lambda x)}{x^{2m+1}} \, \text{d}x \, = \, \frac{1}{2} \, \int_{-\infty}^{\infty} \, \frac{f(x) \, \sin^{2m+1} (\lambda x)}{x^{2m+1}} \, \text{d}x.
\end{align}
Here, we proceed with performing the contour integration by expressing the integral on the right hand side of equation \eqref{eqn:IntSinc2m+1} as the average of two contour integrals \cite{galapon2016cauchy}
\begin{equation}\label{eqn:Gamma+-2m+1}
    \int_{-\infty}^{\infty} \, \frac{f(x) \, \sin^{2m+1} (\lambda x)}{x^{2m+1}} \, \text{d}x \, =  \, \frac{1}{2} \left( \int_{\Gamma^+} \, \frac{f(z) \, \sin^{2m+1} (\lambda z)}{z^{2m+1}} \, \text{d}z + \, \int_{\Gamma^-} \, \frac{f(z) \, \sin^{2m+1} (\lambda z)}{z^{2m+1}} \, \text{d}z \right),
\end{equation}
where $\Gamma^+$ and $\Gamma^-$ follow the same path as shown in Figure \ref{fig:Figure1}. The sine function is then expanded in series representation leading to
\begin{align}
    \sin^{2m+1} (\lambda z) \, = \, & \, \frac{i \, (-1)^m (2m+1)!}{2^{2m+1}} \, \sum_{k=0}^{2m+1} \,\frac{(-1)^k}{k! \, (2m+1-k)!} \ e^{i \lambda z (2m+1-2k)}.
\end{align}
Inserting this expansion into the integrals on the right-hand side in equation \eqref{eqn:Gamma+-2m+1} and interchanging the order of summation and integration, we obtain
\begin{align}\label{eqn:OddSum}
    I_2 (\lambda) \, & = \, \frac{i (-1)^m (2m+1)!}{2^{2m+3}} \, \sum_{k=0}^{2m+1} \,\frac{(-1)^k }{k! \, (2m+1-k)!} \, \left[ \int_{\Gamma^+} \, \frac{f(z) \, e^{i \lambda z (2m+1-2k)}}{z^{2m+1}} \, \text{d}z + \int_{\Gamma^-} \, \frac{f(z) \, e^{i \lambda z (2m+1-2k)}}{z^{2m+1}} \, \text{d}z  \right].
    \end{align}
In the summation on the right-hand side of equation \eqref{eqn:OddSum}, we separate it into two sums: one over $k=0,\dots,m$ and another over $k=m+1,\dots,2m+1$. Doing so reveals that the powers of the exponential function in first sum over $k=0,\dots,m$ remain positive, while in the second sum over $k=m+1,\dots,2m+1$, the powers become negative. As a result, we obtain
\begin{align}\label{eqn:IntExpandOdd}
    I_2 (\lambda) \, & = \, \frac{i (-1)^m (2m+1)!}{2^{2m+3}} \, \sum_{k=0}^{m} \,\frac{(-1)^k }{k! \, (2m+1-k)!} \, \left[ \int_{\Gamma^+} \, \frac{f(z) \, e^{i \lambda z (2m+1-2k)}}{z^{2m+1}} \, \text{d}z + \int_{\Gamma^-} \, \frac{f(z) \, e^{i \lambda z (2m+1-2k)}}{z^{2m+1}} \, \text{d}z  \right] \nonumber \\
    & \quad + \, \frac{i (-1)^m (2m+1)!}{2^{2m+3}} \, \sum_{k=m+1}^{2m+1} \,\frac{(-1)^k }{k! \, (2m+1-k)!} \, \left[ \int_{\Gamma^+} \, \frac{f(z) \, e^{- i \lambda z (2k-2m-1)}}{z^{2m+1}} \, \text{d}z + \int_{\Gamma^-} \, \frac{f(z) \, e^{- i \lambda z (2k-2m-1)}}{z^{2m+1}} \, \text{d}z  \right].
\end{align}

We now evaluate the integrals in the first term of equation \eqref{eqn:IntExpandOdd}. Notice that these integrals are identical to those in the second term of equation \eqref{eqn:IntExpandGen}. Following the same contour integration procedure, we analyze the estimates of each integral along the corresponding paths in the contour.
In the upper half plane, we once again consider the contour $C^+$ in Figure \ref{fig:Figure1} and note that no poles are enclosed within this contour.
Following the same estimates along the path $L_1$ and $L_3$, the contributions from these paths vanish in the limit as $L\to \pm \infty$, respectively \cite{boas1954entire,schmeisser2007approximation}. Determining the bound for the integral along the path $L_2$, we have
\begin{equation}
    \bigg| \int_{L}^{-L} \, \frac{f(x + i \gamma) \, e^{i \lambda (x + i \gamma) (2m+1-2k)}}{(x + i \gamma)^{2m+1}} \, \text{d}x \bigg| \, \leq \, \int_{-\infty}^{\infty} \, \big|  f(x + i \gamma) \big| \, \big| \frac{e^{i \lambda (x + i \gamma) (2m+1-2k)}}{(x + i \gamma)^{2m+1}} \big|  \, \text{d}x \, \leq \, M \,  e^{- \gamma (\lambda (2m+1-2k) - \tau) }  .
\end{equation}
Here, we have used the property of the exponential type function for $f(x)$. By imposing the condition
\begin{align}\label{eqn:condition3}
    \lambda>\frac{\tau}{(2m+1-2k)},\,\,\,\, \mbox{for} \,\,\,\, k=0,\dots,m,
\end{align}
the contribution along the path $L_2$ vanishes in the limit as $L\to \infty$. Consequently, we get
\begin{align}
     \int_{\Gamma^+} \frac{f(z) \, e^{i \lambda z (2m+1-2k)}}{z^{2m+1}} \, \text{d}z  = 0.
\end{align}

We also consider the contour $C^-$ in the upper half-plane, which now encloses the pole at $z=0$ of the integrand as shown in Figure \ref{fig:Figure1}. Following a similar contour integration procedure from Theorem 1, and applying the same estimates along the paths $L_1, L_3$ and $L_2$ where we note that their contributions vanish in the limit as $L \to \pm\infty$ and $\lambda>\tau/(2m+1-2k)$ is satisfied, we arrive with
\begin{align}
    \int_{\Gamma^-} \, \frac{f(z) \, e^{i \lambda z (2m+1-2k)}}{z^{2m+1}} \, \text{d}z = 2 \pi i \, \text{Res} \left[\frac{f(z) \, e^{i \lambda z (2m+1-2k)}}{z^{2m+1}},z=0 \right].
\end{align}
From the above results, we proceed by calculating the residues of the enclosed pole to finally obtain
\begin{align}
    \int_{\Gamma^+} \, \frac{f(z) \, e^{i \lambda z (2m+1-2k)}}{z^{2m+1}} \, \text{d}z + \int_{\Gamma^-} \, \frac{f(z) \, e^{i \lambda z (2m+1-2k)}}{z^{2m+1}} \, \text{d}z \, = 2 \pi i \, \sum_{s=0}^{2m} \, \frac{(i \lambda)^s \,  f^{(2m-s)} (0)}{s! \, (2m-s)!} \, (2m-2k+1)^s. \label{eqn:OddGamma1}
\end{align}

We now evaluate the integrals in the second term of equation \eqref{eqn:IntExpandOdd}, which are identical to those in the third term of equation \eqref{eqn:IntExpandGen}.
In the lower half plane, we consider the contour $C'^+$ as shown in Figure \ref{fig:Figure2} where the pole at $z=0$ is enclosed within this path. 
The same estimate is applied to the contributions along the path $L_4$ and $L_6$ in which these integrals vanish in the limit as $L\to \pm\infty$, respectively \cite{boas1954entire,schmeisser2007approximation}. For the integral along the path $L_5$, we determine its bound
\begin{equation}
    \bigg| \int_{L}^{-L} \, f(x - i \gamma) \, \frac{e^{-i \lambda (x + i \gamma) (2k-2m-1)}}{(x + i \gamma)^{2m+1}} \, \text{d}x \bigg| \,   \, \leq \, \int_{-\infty}^{\infty} \, \big|  f(x - i \gamma) \big| \, \big| \frac{e^{-i \lambda (x - i \gamma) (2k-2m-1)}}{(x - i \gamma)^{2m+1}} \big|  \, \text{d}x \, \leq \, M \,  e^{- \gamma (\lambda (2k-2m-1) - \tau) },
\end{equation}
where we have used the property of the exponential type function for $f(x)$. Imposing the condition
\begin{align}\label{eqn:condition4}
    \lambda > \frac{\tau}{(2k-2m-1)}, \,\,\,\, \mbox{for} \,\,\,\, k=m+1,\dots,2m+1,
\end{align}
the integral along the path $L_5$ vanishes in the limit as $L\to\infty$. 
Also, obtaining the value of the residue at $z=0$, we compute the necessary derivatives of the integrand to arrive with
\begin{align}
     \int_{\Gamma^+} \, \frac{f(z) \, e^{- i \lambda z (2k-2m-1)}}{z^{2m+1}} \, \text{d}z \, = - 2 \pi i \, \sum_{s=0}^{2m} \, \frac{(- i \lambda)^s \,  f^{(2m-s)} (0)}{s! \, (2m-s)!} \, (2k-2m-1)^s. \label{eqn:IntGamma2a}
\end{align}

Lastly, we consider the contour $C'^-$ which encloses no poles of the integrand. 
In the limit as $L\to \pm \infty$, we know that the contributions along the paths $L_4$ and $L_6$ vanish respectively \cite{boas1954entire,schmeisser2007approximation}. Additionally, the contribution along the path $L_5$ also vanishes in the limit as $L\to\infty$ provided the condition $\lambda > \tau/ (2k-2m-1)$ for values of $k=m+1,\dots,2m+1$ is satisfied. As a result, we get
\begin{align}
    \int_{\Gamma^-} \, \frac{f(z) \, e^{- i \lambda z (2k-2m-1)}}{z^{2m+1}}  \, \text{d}z \, = 0.
\end{align}
Therefore, we arrive with
\begin{align}
     \int_{\Gamma^+} \, \frac{f(z) \, e^{- i \lambda z (2k-2m-1)}}{z^{2m+1}} \, \text{d}z \, + \int_{\Gamma^-} \, \frac{f(z) \, e^{- i \lambda z (2k-2m-1)}}{z^{2m+1}}  \, \text{d}z \, = - 2 \pi i \, \sum_{s=0}^{2m} \, \frac{(- i \lambda)^s \,  f^{(2m-s)} (0)}{s! \, (2m-s)!} \, (2k-2m-1)^s. \label{eqn:IntGamma2b}
\end{align}

Using the results obtained from equation \eqref{eqn:IntGamma2a} and \eqref{eqn:IntGamma2b}, and substituting them into equation \eqref{eqn:IntExpandOdd}, we get
\begin{align}
    I_2 (\lambda) \, & = \, \frac{(-1)^m \pi (2m+1)!}{2^{2m+2}} \, \sum_{s=0}^{2m} \, \frac{(i \lambda)^s \,  f^{(2m-s)} (0)}{s! \, (2m-s)!} \, \left[ \sum_{k=0}^{m} \,\frac{(-1)^k \, (2m-2k+1)^s}{k! \, (2m+1-k)!}  - \sum_{k=m+1}^{2m+1} \,\frac{(-1)^k \, (2k-2m-1)^s}{k! \, (2m+1-k)!} \right]. \label{eqn:IntExpandOdd2}
\end{align}
For the sum over $k=m+1,\dots,2m+1$ on the right-hand side of equation \eqref{eqn:IntExpandOdd2}, we can rewrite it as a sum over $k=0,\dots,m$. With this, we obtain
\begin{align}\label{eqn:2SumsOdd}
    \sum_{k=0}^{m} \,\frac{(-1)^k \, (2m-2k+1)^s}{k! \, (2m+1-k)!}  + \sum_{k=0}^{m} \,\frac{(-1)^{k} \, (-1)^s \, (2k-2m-1)^s}{k! \, (2m+1-k)!} \, = \, 2 \, \sum_{k=0}^{m} \,\frac{(-1)^k \, (2m-2k+1)^s}{k! \, (2m+1-k)!},
\end{align}
where the left-hand side of equation \eqref{eqn:2SumsOdd} is equal to zero when $s$ is odd and nonzero when $s$ is even so we set $s=2l$ in equation \eqref{eqn:IntExpandOdd2}.

Finally, the least upper bound of the right-hand side of conditions \eqref{eqn:condition3} and \eqref{eqn:condition4} is $\tau$. Thus, when $\lambda>\tau$, the conditions \eqref{eqn:condition3} and \eqref{eqn:condition4} are simultaneously satisfied.
This completes the proof of Theorem 2.

\end{proof}

\subsection{Example}
Let us consider the following integral
\begin{equation}\label{eqn:ExampleSin3}
     \, \int_0^{\infty} \, \frac{\sin\sqrt{\tau^2 x^2 + \sigma^2}}{\sqrt{\tau^2 x^2 + \sigma^2}} \, \frac{\sin^3 (\lambda x)}{x^3} \, \text{d}x,
\end{equation}
where the same entire exponential type function is involved as in equation \eqref{eqn:ExampleSin4}. Applying the results from Theorem 2 where $m=1$, we get
\begin{equation}
     \, \int_0^{\infty} \, \frac{\sin\sqrt{\tau^2 x^2 + \sigma^2}}{\sqrt{\tau^2 x^2 + \sigma^2}} \, \frac{\sin^3 (\lambda x)}{x^3} \, \text{d}x = \frac{\pi \tau^2 }{8 \sigma^3} \, ( \sigma \cos(\sigma) - \sin (\sigma)) + \,  \frac{3 \pi \lambda^2 \, }{8 \sigma} \sin (\sigma), \quad \lambda > \tau.
\end{equation}

A plot of the integrand of equation \eqref{eqn:ExampleSin3} is also illustrated in Figure \ref{fig:Figure4}. We see there that the net area under the curve of the integral in \eqref{eqn:ExampleSin3} increases for increasing $\lambda$ which is consistent with the polynomial behavior of the obtained expansion for positive powers of $\lambda$ as $\lambda\to\infty$. This is contrary to the usual behavior of asymptotic expansions given by the Poincar{\'e} asymptotic expansion which is in inverse powers of $\lambda$ for arbitrary large $\lambda$.
\begin{figure}
\centering
\includegraphics[width=0.8\textwidth]{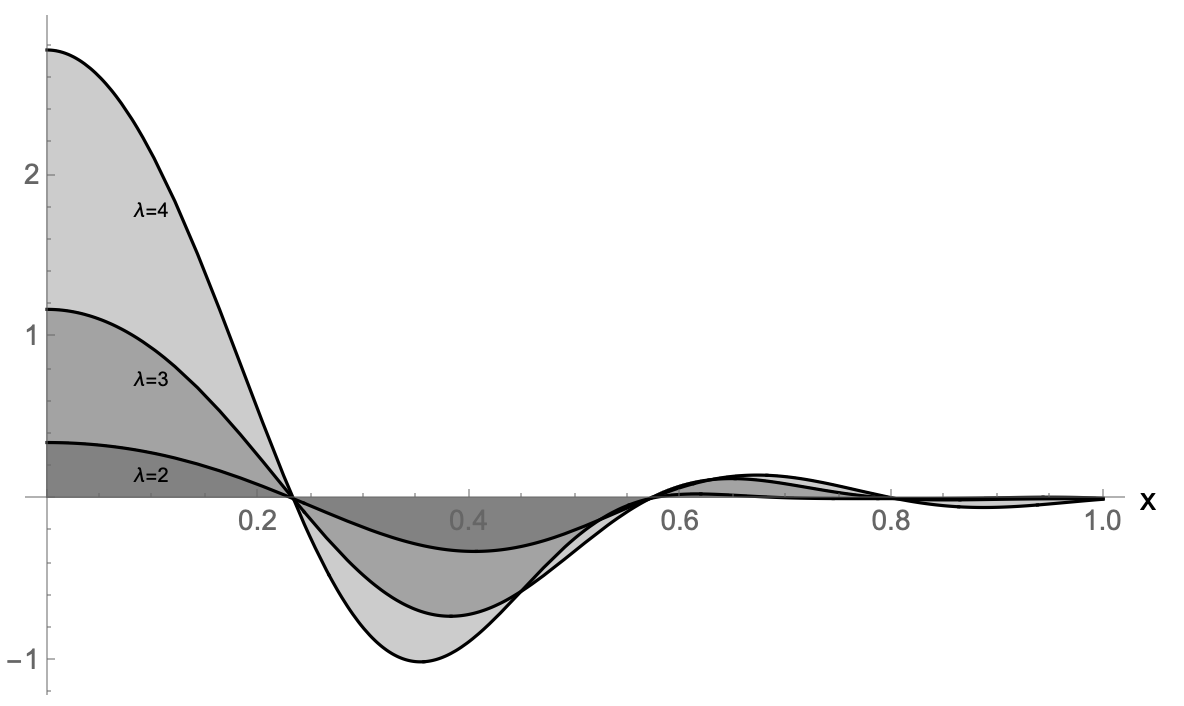}
\caption{The plot of the integrand, $\frac{\sin\sqrt{\tau^2 x^2 + \sigma^2}}{\sqrt{\tau^2 x^2 + \sigma^2}} \, \frac{\sin^{3} (\lambda x)}{x^{3}}$ for $\lambda =2,3,4$.}
\label{fig:Figure4}
\end{figure}

\end{document}